\theoremstyle{plain}
\newtheorem{theorem}{Theorem}
\numberwithin{equation}{section}
\newcommand{\ra}{\rightarrow}
\newcommand{\OO}{\Omega} 
\newcommand{\oo}{\omega} 
\newcommand{\ind}{{\rm ind}}
\begin{document}

\title {Index theory for partial-bijections}

\date{}

\author[P.L. Robinson]{P.L. Robinson}

\address{Department of Mathematics \\ University of Florida \\ Gainesville FL 32611  USA }

\email[]{paulr@ufl.edu}

\subjclass{} \keywords{}

\begin{abstract}

We offer streamlined proofs of fundamental theorems regarding the index theory for partial self-maps of an infinite set that are bijective between cofinite subsets. 

\end{abstract}

\maketitle

\medbreak

\section{Introduction} 

In a recent paper [1] we called the self-map $f$ of the infinite set $\OO$ a {\it near-bijection} precisely when $f$ restricts to a true bijection from a cofinite subset $A \subseteq \OO$ to a cofinite subset $B \subseteq \OO$. Along with the range $f(\OO)$ of $f$ we introduced its `monoset' 
$$\OO_f = \{ \oo \in \OO : \overleftarrow{f} (f(\oo)) = \{ \oo \} \};$$ 
in these terms, $f$ is a near-bijection precisely when $f(\OO)$ and $\OO_f$ are cofinite. The {\it index} of the near-bijection $f$ is then defined by 
$${\ind} (f) = |\OO_f'| - |f(\OO_f')| - |f(\OO)'| \in \mathbb{Z}$$
where if $C \subseteq \OO$ then $|C|$ is its cardinality and $C' = \OO \setminus C$ is its complement. We showed in [1] that the index is insensitive to changes on a finite set and that ${\ind} (f)$ is zero precisely when $f$ differs from a bijection on a finite set. We also showed that when near-bijections that differ on a finite set are identified, their equivalence classes constitute a group on which the $\mathbb{Z}$-valued index is a homomorphism. 

\medbreak 

In [1] considerable effort was devoted to the careful handling of $\OO_f'$ and $f(\OO)'$: for example, when the value of a near-bijection $f$ is changed at $\oo \in \OO_f'$  it is important to know whether the number of points at which $f(\oo)$ was formerly the value is two or is greater than two; it is also important to know whether the new value of $f$ at $\oo$ was or was not formerly a value of $f$. These circumstances cause technical complications: for example, in the verification that the index is insensitive to changes on a finite set and in the verification that the index is a homomorphism. Our primary purpose in this paper is to reformulate the notion of near-bijection in a way that circumvents these complications and facilitates streamlined proofs of the fundamental results.  

\medbreak 

\section{Index theory} 

\medbreak 

Let $\OO$ be an infinite set. 

\medbreak 

{\bf Definition:} A {\it partial-bijection} is a (true) bijection $f : A_f \ra B_f$ from a cofinite subset $A_f \subseteq \OO$ to a cofinite subset $B_f \subseteq \OO$. 

\medbreak 

This is our reformulation of the notion of near-bijection: as $f(\OO)'$ and (especially) $\OO_f'$ were the source of complications in [1] we simply eliminate them; much of the focus in [1] was on properties defined only up to changes on finite sets, so this reformulation is eminently reasonable. More strictly, we should perhaps speak of a bijective partial self-map; but the convenient abuse `partial-bijection' is also reasonable. 

\medbreak 

{\bf Definition:} The {\it index} of the partial-bijection $f$ is defined by 
$${\ind} (f) = |A_f'| - |B_f'| \in \mathbb{Z}.$$

\medbreak 

We should of course verify that this notion of index agrees with the notion in [1]. To this end, let $f: \OO \ra \OO$ be a near-bijection in the sense of [1]: that is, a map for which the complements $\OO_f'$ (see the Introduction) and $f(\OO)'$ are finite. Restricting $f$ (but using the same symbol for convenience) yields a partial-bijection $f: \OO_f \ra f(\OO_f)$: in fact, 
$$\OO \setminus f(\OO_f) = (\OO \setminus f(\OO)) \cup (f(\OO) \setminus f(\OO_f))$$
where $f(\OO) \setminus f(\OO_f) = f(\OO \setminus \OO_f)$ by definition of $\OO_f$; accordingly, 
$$|f(\OO_f)'| = |f(\OO)'| + |f(\OO_f')|.$$
It follows that the index of $f: \OO \ra \OO$ as defined in [1]  is 
$$|\OO_f'| - |f(\OO_f')| - |f(\OO)'| = |\OO_f'| - |f(\OO_f)'|$$
and so agrees with the index of $f: \OO_f \ra f(\OO_f)$ as presently defined. 

\medbreak 

With the current definitions, the following is immediate. 

\medbreak 

\begin{theorem} \label{zero}
The partial-bijection $f$ extends to a true bijection $\OO \ra \OO$ precisely when ${\ind} (f)$ vanishes. 
\end{theorem}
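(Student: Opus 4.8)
The plan is to prove the two implications separately, exploiting the defining hypotheses that $A_f$ and $B_f$ are cofinite, so that $A_f'$ and $B_f'$ are \emph{finite} sets; consequently $\ind(f) = |A_f'| - |B_f'|$ vanishes precisely when these two finite sets have the same cardinality, hence precisely when a bijection $A_f' \ra B_f'$ exists.

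For the forward implication, suppose $g : \OO \ra \OO$ is a true bijection with $g$ restricting to $f$ on $A_f$. Since $g$ is a bijection carrying $A_f$ onto $B_f$, it carries the complement $A_f'$ onto the complement $B_f'$; restriction of $g$ therefore furnishes a bijection $A_f' \ra B_f'$, so that $|A_f'| = |B_f'|$ and $\ind(f) = 0$.

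For the reverse implication, suppose $\ind(f) = 0$, so that $|A_f'| = |B_f'|$; as both complements are finite, fix any bijection $h : A_f' \ra B_f'$ (the empty map if the complements are empty). Define $g : \OO \ra \OO$ by letting $g$ agree with $f$ on $A_f$ and with $h$ on $A_f'$. This is well defined because $A_f$ and $A_f'$ partition $\OO$; it is injective because $f$ and $h$ are injective with disjoint images $B_f$ and $B_f'$; and it is surjective because $B_f \cup B_f' = \OO$. Hence $g$ is a bijection extending $f$.

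I do not expect a genuine obstacle here — the result is immediate from the definitions, exactly as the text asserts. The only step that merits a sentence of justification is that the pasted map $g$ is a bijection, and even that is just the elementary remark that a map assembled from two bijections defined on complementary domains with complementary codomains is again a bijection.
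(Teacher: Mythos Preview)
Your proof is correct and follows exactly the same idea as the paper's: the paper simply observes in one line that the bijection $f : A_f \to B_f$ extends to a bijection of $\OO$ precisely when the finite complements $A_f'$ and $B_f'$ have the same cardinality, while you spell out both implications of this observation in detail.
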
 

\begin{proof} 
The bijection $f: A_f \ra B_f$ extends to a bijection from $\OO$ to itself precisely when the finite complements $A_f'$ and $B_f'$ have the same cardinality. 
\end{proof} 

\medbreak 

In [1] we identified near-bijections when they differed on a finite set. Here, the corresponding identification results from the following definition. 

\medbreak 

{\bf Definition:}  The partial-bijections $f$ and $g$ are {\it almost equal} (notation: $f \equiv g$) precisely when $f$ and $g$ agree on a cofinite subset of $A_f \cap A_g$. 

\medbreak 

It is readily verified that almost equality is an equivalence relation; transitivity would fail were we simply to insist that $f$ and $g$ agree on their common domain $A_f \cap A_g$. 

\medbreak

As expected, the index is insensitive to changes on a finite set. 

\medbreak 

\begin{theorem} \label{ind}
Let $f$ and $g$ be partial-bijections. If $f \equiv g$ then $\ind (f) = \ind (g)$. 
\end{theorem}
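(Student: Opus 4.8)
The plan is to reduce the statement to the special case in which $g$ is the restriction of $f$ to a cofinite subset of $A_f$. Suppose $f \equiv g$, and choose a cofinite subset $S \subseteq A_f \cap A_g$ on which $f$ and $g$ agree; since $A_f$ and $A_g$ are cofinite in $\OO$, so is $S$. Write $h$ for the common restriction $h = f|_S = g|_S$: this is a bijection from the cofinite set $S$ onto $f(S) = g(S)$, hence itself a partial-bijection, with $A_h = S$ and $B_h = f(S) = g(S)$. It will then suffice to prove $\ind(h) = \ind(f)$ (and, by the identical argument with $g$ in place of $f$, that $\ind(h) = \ind(g)$), whence $\ind(f) = \ind(h) = \ind(g)$.

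So the crux is the following: if $S \subseteq A_f$ is cofinite and $h = f|_S : S \ra f(S)$, then $\ind(h) = \ind(f)$. To see this, first note that $A_h' = \OO \setminus S$ is the disjoint union of $A_f' = \OO \setminus A_f$ and the finite set $A_f \setminus S$, so $|A_h'| = |A_f'| + |A_f \setminus S|$. Next, since $f$ maps $A_f$ bijectively onto $B_f$ we have $f(S) \subseteq B_f$, and $B_h' = \OO \setminus f(S)$ is the disjoint union of $B_f' = \OO \setminus B_f$ and $B_f \setminus f(S) = f(A_f) \setminus f(S) = f(A_f \setminus S)$; as $f$ is injective this last set has the same (finite) cardinality as $A_f \setminus S$, so $|B_h'| = |B_f'| + |A_f \setminus S|$. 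Subtracting, the two copies of $|A_f \setminus S|$ cancel and $\ind(h) = |A_h'| - |B_h'| = |A_f'| - |B_f'| = \ind(f)$.

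The argument is essentially bookkeeping; the one point requiring care is that every cardinal appearing must be finite so that the subtraction is legitimate. This holds automatically: $A_f'$ and $B_f'$ are finite by hypothesis, $A_f \setminus S$ is finite because $S$ is cofinite, and $f(A_f \setminus S)$ is finite because $f$ is a map. The only structural inputs are that $f$ is onto $B_f$ (to get $f(S) \subseteq B_f$) and that $f$ is one-to-one (to identify $B_f \setminus f(S)$ with $f(A_f \setminus S)$ and to equate their cardinalities). I expect no genuine obstacle here; the mildly delicate step is the one at the very start, namely observing that the set $S$ supplied by $f \equiv g$ is cofinite in all of $\OO$ and not merely in $A_f \cap A_g$, which is exactly what licenses treating $h$ as a bona fide partial-bijection.
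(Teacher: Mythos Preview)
Your proof is correct and follows essentially the same route as the paper's: both introduce the common restriction $h=f|_S=g|_S$ (the paper calls it $e$) and show $\ind(f)=\ind(h)=\ind(g)$ by the same bookkeeping on the finite complements. The only difference is cosmetic---where you decompose $A_h'$ and $B_h'$ as disjoint unions, the paper rewrites $A_f\setminus S$ as $S'\setminus A_f'$ (and similarly on the $B$ side) and equates cardinalities directly.
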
 

\begin{proof} 
Let $f$ and $g$ agree on the cofinite set $E \subseteq A_f \cap A_g$ and write $e : E \ra e(E)$  for the common restriction $f|_E = g|_E$. Now $f$ restricts to a bijection $A_f \setminus E \ra B_f \setminus e(E)$ where 
$$A_f \setminus E = A_f \cap E' = E' \setminus A_f'$$
and 
$$B_f \setminus e(E) = B_f \cap e(E)' = e(E)' \setminus B_f'$$
whence 
$$|E'| - |A_f'| = |e(E)'| - |B_f'|$$
and therefore $$\ind (f) = |A_f'| - |B_f'| = |E'| - |e(E)'| = \ind (e).$$
The symmetric observation that $\ind (e)$ equals $\ind (g)$ completes the proof. 
\end{proof} 

\medbreak 

{\bf Remark:} Comparison with the proof of the corresponding result (Theorem 19) in [1] amply demonstrates the virtue of the approach adopted here. 

\medbreak 

Let us now consider the composition of partial-bijections. The natural approach to composition of partial maps suggests the following.

\medbreak 

{\bf Definition:} The {\it composite} of the partial-bijections $f$ and (then) $g$ is the map $g \circ f : A_{g \circ f} \ra B_{g \circ f}$ with domain 
$$A_{g \circ f} := \overleftarrow{f} (B_f \cap A_g)$$
with codomain 
$$B_{g \circ f} := \overrightarrow{g} (B_f \cap A_g)$$
and with rule 
$$(\forall \oo \in \OO) \; \; \; (g \circ f) (\oo) := g(f(\oo)).$$

\medbreak 

\begin{theorem} \label{comp}
If $f$ and $g$ are partial-bijections, then so is $g \circ f$. 
\end{theorem}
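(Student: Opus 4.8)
The plan is to verify, one at a time, the three conditions that make $g \circ f$ a partial-bijection: that it is a bijection from $A_{g\circ f}$ onto $B_{g\circ f}$, and that each of $A_{g\circ f}$ and $B_{g\circ f}$ is a cofinite subset of $\OO$.

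First I would note that $B_f \cap A_g$ is cofinite in $\OO$, since $(B_f \cap A_g)' = B_f' \cup A_g'$ is a union of two finite sets. Now $f : A_f \ra B_f$ is a bijection and $B_f \cap A_g \subseteq B_f$, so restricting $f$ gives a bijection $A_{g\circ f} = \overleftarrow{f}(B_f \cap A_g) \ra B_f \cap A_g$; likewise $g : A_g \ra B_g$ is a bijection and $B_f \cap A_g \subseteq A_g$, so restricting $g$ gives a bijection $B_f \cap A_g \ra \overrightarrow{g}(B_f \cap A_g) = B_{g\circ f}$. Composing these two bijections — and noting that on $A_{g\circ f}$ the map $g \circ f$ is literally $\oo \mapsto g(f(\oo))$ — shows that $g \circ f : A_{g\circ f} \ra B_{g\circ f}$ is a bijection.

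It then remains to check cofiniteness of the domain and codomain. For the domain I would rewrite $A_{g\circ f} = \{ \oo \in A_f : f(\oo) \in A_g \} = A_f \setminus \overleftarrow{f}(B_f \cap A_g')$, so that its complement in $\OO$ is $A_f' \cup \overleftarrow{f}(B_f \cap A_g')$; here $A_f'$ is finite by hypothesis, and $\overleftarrow{f}(B_f \cap A_g')$ is finite because the injectivity of $f$ on $A_f$ embeds this preimage into the finite set $A_g'$. By the symmetric manoeuvre $B_{g\circ f} = B_g \setminus \overrightarrow{g}(A_g \cap B_f')$ has complement $B_g' \cup \overrightarrow{g}(A_g \cap B_f')$, again finite (the second summand because $g$ is injective on $A_g$). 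Hence $A_{g\circ f}$ and $B_{g\circ f}$ are cofinite, and $g \circ f$ is a partial-bijection.

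The argument is bookkeeping rather than genuinely hard; the one place I would be most careful is to keep every application of $\overleftarrow{f}$ inside $A_f$ and every application of $\overrightarrow{g}$ inside $A_g$, since it is exactly this that makes the set-identities for $A_{g\circ f}$, $B_{g\circ f}$ and their complements valid. I do not anticipate a real obstacle.
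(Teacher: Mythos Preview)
Your proof is correct and follows essentially the same route as the paper: both split $A_{g\circ f}'$ into $A_f'$ together with the $f$-preimage of $B_f \setminus A_g$, and argue symmetrically for $B_{g\circ f}'$. The one noteworthy difference is that the paper records the exact cardinalities $|A_{g\circ f}'| = |A_f'| + |A_g' \setminus B_f'|$ and $|B_{g\circ f}'| = |B_g'| + |B_f' \setminus A_g'|$, which it then reuses verbatim in the proof of Theorem~\ref{sum}; your argument establishes only finiteness, which suffices for the present statement but would require redoing that small computation later.
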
 

\begin{proof} 
That $g \circ f: A_{g \circ f} \ra B_{g \circ f}$ is a bijection is clear. To see that $A_{g \circ f}$ is cofinite, observe that 
$$A_{g \circ f}' = \overleftarrow{f} (B_f \cap A_g)' = (\OO \setminus A_f) \cup (A_f \setminus \overleftarrow{f} (B_f \cap A_g))$$
where $|\OO \setminus A_f| = |A_f'|$ and where (as $f$ is a bijection from $A_f$ to $B_f$)
$$|A_f \setminus \overleftarrow{f} (B_f \cap A_g)| = |B_f \setminus (B_f \cap A_g)| = |B_f \setminus A_g| = |A_g' \setminus B_f'|.$$
Thus 
$$|\overleftarrow{f} (B_f \cap A_g)'| = |A_f'| + |A_g' \setminus B_f'|$$
is finite, as likewise is 
$$| \overrightarrow{g} (B_f \cap A_g)'| = |B_g'| + |B_f' \setminus A_g'|.$$
\end{proof} 

\medbreak 

We now propose to prove that when partial-bijections are composed their indices add. For this purpose, it is convenient first to record the following triviality. 

\medbreak 

\begin{theorem} \label{triv}
If $X$ and $Y$ are finite sets then $|X \setminus Y| + |Y| = |Y \setminus X| + |X|.$
\end{theorem}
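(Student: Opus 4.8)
The plan is to prove the identity $|X \setminus Y| + |Y| = |Y \setminus X| + |X|$ by observing that both sides count the same finite set, namely $X \cup Y$, and hence are equal as natural numbers. The key fact is the disjoint decomposition $X \cup Y = (X \setminus Y) \sqcup Y$, where the two pieces are disjoint because $X \setminus Y$ and $Y$ share no element; taking cardinalities and using finite additivity gives $|X \cup Y| = |X \setminus Y| + |Y|$. By the symmetric decomposition $X \cup Y = (Y \setminus X) \sqcup X$ we likewise get $|X \cup Y| = |Y \setminus X| + |X|$. Equating the two expressions for $|X \cup Y|$ yields the claim.

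Concretely, first I would record that $(X \setminus Y) \cap Y = \varnothing$ and $(X \setminus Y) \cup Y = X \cup Y$, both of which are immediate from the definitions of set difference and union. Next I would invoke the elementary principle that for disjoint finite sets $P$ and $Q$ one has $|P \sqcup Q| = |P| + |Q|$ (all sets in sight are finite by hypothesis, so this is legitimate and every quantity appearing is an honest nonnegative integer). Applying this with $P = X \setminus Y$, $Q = Y$ gives the first displayed equality; interchanging the roles of $X$ and $Y$ — which the statement is visibly symmetric under — gives the second. A one-line conclusion then finishes the argument.

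There is essentially no obstacle here; the result is, as the paper says, a triviality, and the only thing to be careful about is that finiteness is genuinely used (so that subtraction-free cardinal arithmetic applies and the additive formula for disjoint unions holds without set-theoretic subtleties). If one preferred a proof by direct counting rather than via $X \cup Y$, one could instead write $X = (X \cap Y) \sqcup (X \setminus Y)$ and $Y = (X \cap Y) \sqcup (Y \setminus X)$, so that $|X| = |X \cap Y| + |X \setminus Y|$ and $|Y| = |X \cap Y| + |Y \setminus X|$; substituting these into both sides of the asserted equation reduces it to the tautology $|X \cap Y| + |X \setminus Y| + |Y| = |X \cap Y| + |Y \setminus X| + |X|$ after cancellation, but the union-counting route is cleaner and is the one I would present.
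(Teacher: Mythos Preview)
Your proposal is correct and follows exactly the paper's own approach: the paper's one-line proof is that each side of the equation is precisely $|X \cup Y|$, and your argument spells out the disjoint decompositions $(X \setminus Y) \sqcup Y = X \cup Y = (Y \setminus X) \sqcup X$ that justify this.
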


\begin{proof}
Each side of the equation is precisely $|X \cup Y|$. 
\end{proof} 

\medbreak 

Verification of our claim regarding the index of a composite is now quite straightforward. 

\begin{theorem} \label{sum}
If $f$ and $g$ are partial-bijections then $\ind (g \circ f) = \ind (g) + \ind (f).$
\end{theorem}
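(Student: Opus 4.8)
The plan is to reuse the two cardinality computations already carried out in the proof of Theorem \ref{comp} and then reduce everything to the triviality recorded in Theorem \ref{triv}. Specifically, that proof established
$$|A_{g \circ f}'| = |A_f'| + |A_g' \setminus B_f'| \quad \text{and} \quad |B_{g \circ f}'| = |B_g'| + |B_f' \setminus A_g'|,$$
so I would begin by simply subtracting these to obtain
$$\ind (g \circ f) = |A_{g \circ f}'| - |B_{g \circ f}'| = |A_f'| + |A_g' \setminus B_f'| - |B_g'| - |B_f' \setminus A_g'|.$$

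Next I would write out the target quantity $\ind (g) + \ind (f) = |A_g'| - |B_g'| + |A_f'| - |B_f'|$ and compare. Cancelling the common terms $|A_f'|$ and $- |B_g'|$, the desired identity $\ind (g \circ f) = \ind (g) + \ind (f)$ is seen to be equivalent to
$$|A_g' \setminus B_f'| - |B_f' \setminus A_g'| = |A_g'| - |B_f'|,$$
that is, to $|A_g' \setminus B_f'| + |B_f'| = |B_f' \setminus A_g'| + |A_g'|$. This is precisely Theorem \ref{triv} applied with $X = A_g'$ and $Y = B_f'$, both of which are finite since $f$ and $g$ are partial-bijections. That completes the argument.

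There is really no serious obstacle here: the content has been front-loaded into Theorems \ref{comp} and \ref{triv}, and what remains is bookkeeping. The only point requiring a little care is the correct pairing of the two leftover difference-terms $|A_g' \setminus B_f'|$ and $|B_f' \setminus A_g'|$ with the right instance of Theorem \ref{triv} — getting $X$ and $Y$ the right way round — but once the two displays from the proof of Theorem \ref{comp} are written down, the cancellation makes the choice unambiguous.
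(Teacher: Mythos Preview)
Your proposal is correct and matches the paper's proof essentially line for line: both import the two cardinality formulas from the proof of Theorem~\ref{comp}, write out $\ind(g\circ f)$ and $\ind(g)+\ind(f)$, and reduce the comparison to Theorem~\ref{triv} with $X=A_g'$ and $Y=B_f'$. The only cosmetic difference is that the paper phrases the final step as showing the difference $\ind(g\circ f)-\ind(g)-\ind(f)$ vanishes, whereas you cancel terms first and then invoke Theorem~\ref{triv}.
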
 

\begin{proof} 
We continue from the close of the proof for Theorem \ref{comp}. Thus 
$$\ind (g \circ f) = |\overleftarrow{f} (B_f \cap A_g)'| - | \overrightarrow{g} (B_f \cap A_g)'| = |A_f'| + |A_g' \setminus B_f'| - |B_g'| - |B_f' \setminus A_g'|$$
while 
$$\ind (g) + \ind (f) = |A_g'| - |B_g'| + |A_f'| - |B_f'|$$
whence 
$$\ind (g \circ f) - \ind (g) - \ind (f) =  |A_g' \setminus B_f'| + |B_f'| - |B_f' \setminus A_g'| - |A_g'|$$
and an application of Theorem \ref{triv} with $X = A_g'$ and $Y = B_f'$ ends the argument. 
\end{proof}

\medbreak 

{\bf Remark:} In [1] the corresponding result is Theorem 27; once again, comparison highlights the virtue of the approach taken in the present paper. 

\medbreak 

Let $f: A_f \ra B_f$ be a partial bijection: as a bijection, $f$ has an inverse map $f^{-1} : B_f \ra A_f$ which is also a partial-bijection; the composites $f^{-1} \circ f = {\rm Id}_{A_f}$ and $f \circ f^{-1} = {\rm Id}_{B_f}$ imply that $f^{-1} \circ f \equiv {\rm Id}_{\OO} \equiv f \circ f^{-1}$. As a companion to the last theorem, we have the next. 

\medbreak 

\begin{theorem} \label{neg}
If $f$ is a partial-bijection then $\ind (f^{-1}) = - \ind (f).$
\end{theorem}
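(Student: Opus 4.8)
The plan is to obtain this as an immediate corollary of the composition formula (Theorem \ref{sum}) together with the invariance of the index under almost equality (Theorem \ref{ind}); no substantive new work should be required. First I would observe that, as already noted just before the statement, the inverse map $f^{-1} : B_f \ra A_f$ is itself a partial-bijection, with $A_{f^{-1}} = B_f$ and $B_{f^{-1}} = A_f$. Then I would apply Theorem \ref{sum} to the pair consisting of $f$ and (then) $f^{-1}$, obtaining $\ind (f^{-1} \circ f) = \ind (f^{-1}) + \ind (f)$.

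Next I would invoke the remark recorded in the paragraph preceding the theorem: $f^{-1} \circ f = {\rm Id}_{A_f} \equiv {\rm Id}_{\OO}$. Since ${\rm Id}_{\OO}$ is a partial-bijection whose domain and codomain both equal $\OO$, its index is $|\OO'| - |\OO'| = 0$ (equivalently, by Theorem \ref{zero}, because ${\rm Id}_{\OO}$ is already a true bijection of $\OO$). By Theorem \ref{ind} almost-equal partial-bijections have equal index, so $\ind (f^{-1} \circ f) = \ind ({\rm Id}_{\OO}) = 0$. Combining this with the display from the previous paragraph gives $\ind (f^{-1}) + \ind (f) = 0$, which is the assertion.

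I would also remark that the result can be read off directly from the definition of index without appeal to Theorems \ref{ind} and \ref{sum}: since $A_{f^{-1}} = B_f$ and $B_{f^{-1}} = A_f$, one has $\ind (f^{-1}) = |A_{f^{-1}}'| - |B_{f^{-1}}'| = |B_f'| - |A_f'| = -\ind (f)$. Either route is routine; the only point demanding any care is keeping the domain and codomain of $f^{-1}$ in their correct roles, and (for the first route) having the trivial computation $\ind ({\rm Id}_{\OO}) = 0$ in hand. There is no genuine obstacle to anticipate.
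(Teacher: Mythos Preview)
Your argument is correct, but your primary route is more elaborate than the paper's. The paper's own proof is exactly your closing remark: since passing from $f$ to $f^{-1}$ interchanges $A_f$ and $B_f$, one has $\ind(f^{-1}) = |B_f'| - |A_f'| = -\ind(f)$ straight from the definition, with no appeal to Theorems~\ref{ind} or~\ref{sum}. Your main approach via $\ind(f^{-1}\circ f) = \ind(f^{-1}) + \ind(f)$ and $f^{-1}\circ f \equiv {\rm Id}_{\OO}$ is valid and perfectly rigorous, but it invokes two nontrivial earlier results to establish something that is a one-line computation; the direct argument is both shorter and logically more economical, since it makes Theorem~\ref{neg} independent of Theorems~\ref{ind} and~\ref{sum}. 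The compositional route does have the minor conceptual merit of foreshadowing the group-theoretic viewpoint (inverses have inverse index because the index is a homomorphism), but for the purposes of this paper the direct computation is preferable.
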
 

\begin{proof} 
Immediate: passage from $f$ to its inverse switches $A_f$ and $B_f$. 
\end{proof} 

\medbreak 

The permutations of $\OO$ make up the symmetric group $S_{\OO}$; Theorem \ref{zero} says that the partial-bijections having index zero are exactly the restrictions of these permutations to cofinite subsets of $\OO$. It is clear that composing a partial-bijection with a permutation (on either side, left or right) does not affect the index: indeed, if $f$ is a partial-bijection and $\pi$ is a permutation, then $A_{\pi \circ f}' = A_f'$ and $B_{\pi \circ f}' = \overrightarrow{\pi} (B_f)' = \overrightarrow{\pi} (B_f')$ so that 
$$\ind (\pi \circ f) = |A_{\pi \circ f}'| - |B_{\pi \circ f}'| = |A_f'| - |\pi(B_f')| = |A_f'| - |B_f'| = \ind (f)$$
while $A_{f \circ \pi}' = \overleftarrow{\pi} (A_f)' = \overleftarrow{\pi} (A_f')$ and $B_{f \circ \pi}' = B_f'$ so that 
$$\ind (f \circ \pi) = |A_{f \circ \pi}'| - |B_{f \circ \pi}'| = | \overleftarrow{\pi} (A_f')| - |B_f'| = |A_f'| - |B_f'| = \ind (f).$$

\medbreak 

In fact, this essentially covers all cases of equal index: any two partial-bijections having the same index are related by permutations in this way, up to almost equality. 

\medbreak 

\begin{theorem} \label{perm}
Let $f$ and $g$ be partial-bijections. If $\ind (f) = \ind (g)$ then there exist permutations $\lambda \in S_{\OO}$ and $\rho \in S_{\OO}$ such that $\lambda \circ f \equiv g \equiv f \circ \rho.$
\end{theorem}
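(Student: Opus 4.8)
The plan is to build $\lambda$ directly and then invoke the earlier symmetric observations for $\rho$. The key numerical input is that $\ind(f)=\ind(g)$ means $|A_f'|-|B_f'|=|A_g'|-|B_g'|$, equivalently $|A_f'|+|B_g'|=|A_g'|+|B_f'|$; in particular the finite sets $A_f'$ and $A_g'$ need not have the same size, but the deficiency is exactly compensated by $B_f'$ versus $B_g'$. I would first replace $f$ by an almost-equal partial-bijection with a more convenient domain: since almost equality is an equivalence relation and (by Theorem \ref{ind}) preserves the index, and since deleting finitely many points from $A_f$ only shrinks things by a controlled finite amount, I can arrange $A_f\subseteq A_g$ by restricting $f$ to $A_f\cap A_g$ (this is still cofinite). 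After this reduction $A_g'=A_f'\sqcup(A_g\setminus A_f)$ with the middle set finite, and correspondingly the range shrinks: writing $\tilde f$ for the restriction, $B_{\tilde f}=f(A_f\cap A_g)$, so $B_{\tilde f}'=B_f'\sqcup f(A_f\setminus A_g)$, again a finite enlargement.

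The heart of the argument is to construct a permutation $\lambda$ of $\OO$ with $\lambda\circ \tilde f\equiv g$, i.e.\ agreeing with $g$ on a cofinite subset of $A_{\tilde f}$. On $A_{\tilde f}=A_f\cap A_g$ the map $g\circ \tilde f^{-1}$ is a bijection from $B_{\tilde f}$ onto $g(A_f\cap A_g)$; I want to extend this bijection to a permutation of all of $\OO$. By the extension principle underlying Theorem \ref{zero}, a bijection between cofinite subsets $B_{\tilde f}\to g(A_f\cap A_g)$ extends to a permutation of $\OO$ exactly when the complements $B_{\tilde f}'$ and $g(A_f\cap A_g)'$ are finite of equal cardinality. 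So the crux is the bookkeeping identity $|B_{\tilde f}'|=|g(A_f\cap A_g)'|$. I would compute both sides: $|B_{\tilde f}'|=|B_f'|+|f(A_f\setminus A_g)|=|B_f'|+|B_f\setminus A_g|=|B_f'|+|A_g'\setminus B_f'|$ (using that $f:A_f\to B_f$ is a bijection and unwinding the set difference as in the proof of Theorem \ref{comp}), and symmetrically, since $g$ is a bijection $A_g\to B_g$, $|g(A_f\cap A_g)'|=|B_g'|+|A_g\setminus A_f|=|B_g'|+|A_f'\setminus A_g'|$. Two applications of Theorem \ref{triv} reduce the desired equality $|B_f'|+|A_g'\setminus B_f'|=|B_g'|+|A_f'\setminus A_g'|$ to $|A_f'|+|B_g'|=|A_g'|+|B_f'|$, which is precisely the hypothesis $\ind(f)=\ind(g)$. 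Then any permutation $\lambda$ extending $g\circ\tilde f^{-1}$ satisfies $\lambda\circ\tilde f=g$ on $A_f\cap A_g$, hence $\lambda\circ\tilde f\equiv g$, and since $\tilde f\equiv f$ we get $\lambda\circ f\equiv g$.

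For the right-hand statement $g\equiv f\circ\rho$, I would apply the left-hand result to the inverses: by Theorem \ref{neg}, $\ind(f^{-1})=-\ind(f)=-\ind(g)=\ind(g^{-1})$, so the case just proved yields a permutation $\mu$ with $\mu\circ g^{-1}\equiv f^{-1}$; taking inverses (which is compatible with almost equality, as noted in the text before Theorem \ref{neg}, since $f^{-1}\equiv g^{-1}$ forces $f\equiv g$ after reinverting) gives $g\circ\mu^{-1}\equiv f$, i.e.\ $g\equiv f\circ\mu$, so $\rho=\mu$ works. I expect the main obstacle to be the finite-cardinality bookkeeping — making sure the restriction of $f$ to $A_f\cap A_g$ genuinely has cofinite domain and range (clear, since both $A_f\setminus A_g$ and $f(A_f\setminus A_g)$ are finite) and then executing the two set-difference rewrites and the two applications of Theorem \ref{triv} in the right order so that the hypothesis $\ind(f)=\ind(g)$ is exactly what closes the gap; none of this is deep, but it is the only place where anything must actually be checked.
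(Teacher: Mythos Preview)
Your overall plan---form $g\circ f^{-1}$ as a bijection between cofinite subsets and extend it to a permutation via Theorem~\ref{zero}---is exactly the paper's route; your preliminary restriction to $\tilde f=f|_{A_f\cap A_g}$ is harmless but unnecessary, since the partial-bijection composite $g\circ f^{-1}$ already has domain $f(A_f\cap A_g)$ and codomain $g(A_f\cap A_g)$ by the composition rule.

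However, your bookkeeping contains a genuine error. You assert $|f(A_f\setminus A_g)|=|B_f\setminus A_g|$, but injectivity of $f$ gives only $|f(A_f\setminus A_g)|=|A_f\setminus A_g|=|A_g'\setminus A_f'|$; there is no reason for the forward image $f(A_f\setminus A_g)$ to coincide with $B_f\setminus A_g$. Concretely, take $\OO=\mathbb{N}$, $f(n)=n+1$ on $A_f=\mathbb{N}$ (so $B_f'=\{0\}$), and $A_g=\mathbb{N}\setminus\{0\}$: then $f(A_f\setminus A_g)=f(\{0\})=\{1\}$ while $B_f\setminus A_g=\emptyset$. With this slip your ``desired equality'' $|B_f'|+|A_g'\setminus B_f'|=|B_g'|+|A_f'\setminus A_g'|$ is in general false even under the hypothesis $\ind(f)=\ind(g)$ (in the example above, pairing $f$ with $g(n)=n+1$ on $A_g$ gives left side $1$ and right side $2$). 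The fix is immediate: the correct computation is $|B_{\tilde f}'|=|B_f'|+|A_g'\setminus A_f'|$, and then a \emph{single} application of Theorem~\ref{triv} with $X=A_g'$, $Y=A_f'$ matches this against $|g(A_f\cap A_g)'|=|B_g'|+|A_f'\setminus A_g'|$ precisely when $|A_f'|-|B_f'|=|A_g'|-|B_g'|$.

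The paper sidesteps this hand computation entirely: once $g\circ f^{-1}$ is recognised as a partial-bijection, Theorem~\ref{sum} and Theorem~\ref{neg} give $\ind(g\circ f^{-1})=\ind(g)-\ind(f)=0$ in one line, and Theorem~\ref{zero} finishes. Your derivation of $\rho$ by applying the $\lambda$ case to inverses is correct and is a legitimate alternative to the paper's direct use of $f^{-1}\circ g$.
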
 

\begin{proof} 
The composite partial-bijection $g$ after $f^{-1}$ is a true bijection 
$$g \circ f^{-1}: \overrightarrow{f} (A_g) \ra \overrightarrow{g} (A_f). $$
Theorem \ref{sum} and Theorem \ref{neg} show that $\ind (g \circ f^{-1}) = \ind (g) - \ind (f) = 0$ and then Theorem \ref{zero} shows that the partial-bijection $g \circ f^{-1}$ extends to a permutation $\lambda$ of $\OO$; the almost equality $\lambda \circ f \equiv g$ is clear. Similarly, the composite partial-bijection 
$$f^{-1} \circ g: \overleftarrow{g} (B_f) \ra \overleftarrow{f} (B_g)$$
has vanishing index and extends to a permutation $\rho$ of $\OO$ such that $g \equiv f \circ \rho$. 
\end{proof} 

\medbreak 

{\bf Remark:} This demonstrates quite strikingly the virtue of the present approach when dealing with matters that allow indeterminacy on finite sets. In [1] the corresponding result is a combination of Theorem 21, Theorem 22 and Theorem 23; there, complications involving range and monoset necessitate the separate handling of $\lambda$ and $\rho$ as well as the separate handling of negative index and positive index. 

\medbreak 

As in [1] it is of interest to view these results from a group-theoretic perspective. Almost equality defines an equivalence relation on the set of all partial-bijections of $\OO$; we denote by $\mathbb{G}_{\OO}$ the set comprising all such $\equiv$-classes, denoting the $\equiv$-class of $f$ by $[f]$ as usual. 

\begin{theorem} 
Let $f_1, f_2, g_1, g_2$ be partial-bijections. If $f_1 \equiv f_2$ and $g_1 \equiv g_2$ then $g_1 \circ f_1 \equiv g_2 \circ f_2.$ 
\end{theorem}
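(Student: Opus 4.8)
The plan is to show that if $f_1\equiv f_2$ and $g_1\equiv g_2$, then $g_1\circ f_1$ and $g_2\circ f_2$ agree on a cofinite subset of their common domain $A_{g_1\circ f_1}\cap A_{g_2\circ f_2}$. First I would collect the hypotheses concretely: let $E\subseteq A_{f_1}\cap A_{f_2}$ be cofinite with $f_1|_E=f_2|_E$, and let $F\subseteq A_{g_1}\cap A_{g_2}$ be cofinite with $g_1|_F=g_2|_F$. The natural candidate cofinite set on which to compare the two composites is
$$D := E \cap \overleftarrow{f_1}(F),$$
using that $f_1=f_2$ on $E$ so that $\overleftarrow{f_1}(F)\cap E=\overleftarrow{f_2}(F)\cap E$; thus $D$ could equally be written $E\cap \overleftarrow{f_2}(F)$.

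The key steps, in order, are: (1) check that $D$ is cofinite in $\OO$. This follows because $E'$ is finite and $\overleftarrow{f_1}(F)'$ is finite — the latter by the same cofiniteness computation used in the proof of Theorem \ref{comp}, since $F$ is cofinite and $f_1$ is a partial-bijection (explicitly, $\overleftarrow{f_1}(F)' = A_{f_1}' \cup (A_{f_1}\setminus \overleftarrow{f_1}(F))$ and $|A_{f_1}\setminus \overleftarrow{f_1}(F)| = |B_{f_1}\setminus F| \le |F'|$, all finite). (2) Check $D\subseteq A_{g_1\circ f_1}\cap A_{g_2\circ f_2}$: if $\oo\in D$ then $\oo\in E\subseteq A_{f_1}$ and $f_1(\oo)\in F\subseteq A_{g_1}$, and also $f_1(\oo)\in B_{f_1}$ since $\oo\in A_{f_1}$, so $f_1(\oo)\in B_{f_1}\cap A_{g_1}$, i.e. $\oo\in\overleftarrow{f_1}(B_{f_1}\cap A_{g_1})=A_{g_1\circ f_1}$; the same argument with $f_2,g_2$ (using $f_1(\oo)=f_2(\oo)\in F$) gives $\oo\in A_{g_2\circ f_2}$. (3) Check the two composites agree on $D$: for $\oo\in D$ we have $f_1(\oo)=f_2(\oo)$ since $\oo\in E$, call this common value $\beta\in F$; then $g_1(\beta)=g_2(\beta)$ since $\beta\in F$, so $(g_1\circ f_1)(\oo)=g_1(f_1(\oo))=g_1(\beta)=g_2(\beta)=g_2(f_2(\oo))=(g_2\circ f_2)(\oo)$.

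Putting these together, $D$ is a cofinite subset of $A_{g_1\circ f_1}\cap A_{g_2\circ f_2}$ on which $g_1\circ f_1$ and $g_2\circ f_2$ coincide, which is exactly the definition of $g_1\circ f_1\equiv g_2\circ f_2$. I do not expect any serious obstacle here; the only point requiring a little care is step (1), the cofiniteness of $\overleftarrow{f_1}(F)$, but this is precisely the kind of computation already carried out in Theorem \ref{comp} (with $F$ playing the role of $A_g$), so it can be invoked rather than redone. A secondary point worth stating explicitly is the symmetry remark used implicitly throughout: $\overleftarrow{f_1}(F)\cap E = \overleftarrow{f_2}(F)\cap E$ because $f_1$ and $f_2$ agree on $E$, which is what lets the single set $D$ serve both composites simultaneously.
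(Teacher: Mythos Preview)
Your proof is correct and follows essentially the same approach as the paper: the paper names the common restrictions $f:=f_1|_F=f_2|_F$ and $g:=g_1|_G=g_2|_G$ (its $F,G$ are your $E,F$) and exhibits $\overleftarrow{f}(G)$---which is precisely your set $D=E\cap\overleftarrow{f_1}(F)$---as the cofinite set on which the composites agree. The paper merely asserts that the agreement is ``immediate'' and the cofiniteness ``presents no difficulties,'' whereas you spell out both; your write-up is thus a fuller version of the paper's sketch.
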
 

\begin{proof} 
Note from Theorem \ref{comp} that $g_1 \circ f_1$ and $g_2 \circ f_2$ are partial-bijections. Let $F \subseteq A_{f_1} \cap A_{f_2}$ and $G \subseteq A_{g_1} \cap A_{g_2}$ be cofinite sets on which $f_1|_F = f_2|_F =: f$ and $g_1|_G = g_2|_G =: g$. Verification that  $g_1 \circ f_1$ and $g_2 \circ f_2$ agree on $\overleftarrow{f} (G)$ is immediate; verification that $\overleftarrow{f} (G)$ is cofinite presents no difficulties. 
\end{proof} 

\medbreak 

It follows that composition descends to a well-defined (associative) binary operation on $\mathbb{G}_{\OO}$; this makes $\mathbb{G}_{\OO}$ into a group, the inverse of $[f]$ being $[f^{-1}]$. Theorem \ref{ind} guarantees that the index map $\ind$ descends to a well-defined map 
$${\rm Ind} : \mathbb{G}_{\OO} \ra \mathbb{Z}$$
which is a group homomorphism by Theorem \ref{sum}. By Theorem \ref{zero}, the kernel $\mathbb{S}_{\OO}$ of ${\rm Ind}$ comprises precisely all $\equiv$-classes containing permutations. The image of ${\rm Ind}$ is of course $\mathbb{Z}$: note that if $\oo_0 \in \OO$ then any bijection $u: \OO \ra \OO \setminus \{ \oo_0 \}$ has index $- 1$ and if $n \in \mathbb{Z}$ then $[u]^n$ has index $- n$. The cosets of $\mathbb{S}_{\OO} \leqclosed \mathbb{G}_{\OO}$ are labelled by ${\rm Ind}$: this is clear from the fundamental isomorphism theorem, but is also explicit in Theorem \ref{perm} and the discussion leading up to it. 

\medbreak 

Thus, we have constructed a short exact sequence of groups 
$${\rm Id} \ra \mathbb{S}_{\OO} \ra \mathbb{G}_{\OO} \ra \mathbb{Z} \ra 0.$$
This sequence splits, as $\mathbb{Z}$ is infinite cyclic: with $u$ as above, a splitting homomorphism is 
$$\mathbb{Z} \ra \mathbb{G}_{\OO} : n \mapsto [u]^{- n}.$$

\medbreak 

In summary, the approach taken in this paper, based on partial-bijections in place of near-bijections, offers a significantly streamlined route to those results of [1] pertaining to properties that are unaffected by changes on finite subsets of $\OO$; in particular, it is well-suited to handling the group $\mathbb{G}_{\OO}$ and the index. 
\bigbreak

\begin{center} 
{\small R}{\footnotesize EFERENCES}
\end{center} 
\medbreak 

[1] P.L. Robinson, {\it Fredholm theory for cofinite sets}, arXiv 1509.08039 (2015). 

\medbreak

\end{document}